\DeclareMathOperator{\Irr}{Irr}
\newcommand{\bbN}{{\mathbb N}}
\newcommand{\bbC}{\mathbb C}
\DeclareMathOperator{\proj}{proj}
\newcommand{\cB}{\mathcal B}
\newcommand{\calD}{\mathcal D}
\newcommand{\bbP}{\mathbb P}
\newcommand{\e}{\varepsilon}
\newtheorem{thm}{Theorem}
\newtheorem{coro}[thm]{Corollary}
\newtheorem{lemma}[thm]{Lemma}
\newtheorem{prop}[thm]{Proposition}
\theoremstyle{definition}
\newtheorem{problem}[thm]{Problem}
\newcounter{my_enumerate_counter}
\newcommand{\pushcounter}{\setcounter{my_enumerate_counter}{\value{enumi}}}
\newcommand{\popcounter}{\setcounter{enumi}{\value{my_enumerate_counter}}}
\newcommand{\FileName}[1]{\thanks{Filename: {\tt #1}}}
\newcommand{\cU}{{\mathcal U}}
\DeclareMathOperator{\Ad}{Ad}
\title{A dichotomy for the Mackey Borel structure}
\author{Ilijas Farah}
\address{Department of Mathematics and Statistics\\
York University\\
4700 Keele Street\\
North York, Ontario\\ Canada, M3J 1P3}
\address{Matemati\v cki Institut\\ Kneza Mihaila 34\\ 11\,000 Beograd\\ Serbia}
\urladdr{http://www.math.yorku.ca/$\sim$ifarah}
\email{ifarah@mathstat.yorku.ca}
\thanks{The work reported in this note was done in July 2008 while I was visiting IHES and it was presented at a mini-conference in set theory at the Institut Henri Poincar\'e
in July 2008.}
\date{\today}
\subjclass{03E15, 46L30, 22D25}
\keywords{Borel equivalence relations, Mackey Borel structure} 
\begin{document}

\begin{abstract} 
We prove that the equivalence of pure states  of a separable C*-algebra is
either smooth or it continuously reduces $[0,1]^{\bbN}/\ell_2$ and
it therefore cannot be classified by countable structures. 
The latter  was independently proved by Kerr--Li--Pichot by using different methods. 
 We also  give some remarks on 
a 1967 problem of Dixmier. 
\end{abstract} 
\maketitle

If $E$ and $F$
are Borel equivalence relations on Polish spaces $X$ and $Y$,
respectively, then we say that $E$ is \emph{Borel reducible} to $F$
(in symbols, $E\,\leq_B\, F$) if there is a Borel-measurable map
$f\colon X\to Y$ such that for all $x$ and $y$ in $X$ we have $x E y
$ if and only if $f(x) F f(y)$. 
A Borel equivalence relation $E$ is \emph{smooth} if it is
Borel-reducible to the equality relation on some Polish space. Recall
that $E_0$ is the equivalence relation on $2^\bbN$ defined by $x E_0
y$ if and only if $x(n)=y(n)$ for all but finitely many $n$. The
\emph{Glimm--Effros} dichotomy (\cite{HaKeLo:GE}) states that a Borel
equivalence relation $E$ is either smooth or $E_0\,\leq_B\, E$.

One of the  themes of the abstract classification theory is
measuring  relative complexity of classification problems from mathematics
(see e.g.,  \cite{HjKe:Recent}). 
One can formalize the notion of `effectively classifiable by countable structures' in terms
of the relation $\leq_B$ and a natural Polish space of structures based on $\bbN$
in a natural way.  
In \cite{Hj:Book} Hjorth introduced the notion 
of turbulence for orbit equivalence relations and proved that an orbit equivalence
relation given by a turbulent action cannot be effectively classified by countable structures. 

The idea that there should be a small set $\cB$ of Borel  equivalence relations 
not classifiable by countable structures such that for every Borel equivalence 
relation $E$ not classifiable by countable structures there is $F\in \cB$ such that 
$F\,\leq_B \,E$ was put forward in \cite{HjKe:New} and, in a revised form, in~\cite{Fa:c_0}. 
In this note we prove a dichotomy for a class of Borel equivalence relations 
corresponding to the spectra of C*-algebras by showing that one of the standard 
turbulent orbit equivalence relations, $[0,1]^{\bbN}/\ell_2$, 
 is Borel-reducible to every non-smooth spectrum.

\subsection*{States} 
All undefined notions from the theory of C*-algebras
and more details can be found in \cite{Black:Operator} or in \cite{FaWo:Set}. 
Consider a separable C*-algebra $A$. 
Recall that a functional $\phi$ on $A$ is \emph{positive} if it sends 
every positive operator in $A$ to a positive real number. 
 A positive functional is a \emph{state} 
if it is of norm $\leq 1$.  The states form a compact convex set, and the 
extreme points of this set are the \emph{pure states}. 
The space of pure states on $A$,
denoted by $\bbP(A)$, equipped with the weak*-topology, is a Polish
space (\cite[4.3.2]{Pede:C*}).

A C*-algebra $A$ is \emph{unital} if it has the multiplicative identity. 
Otherwise, we define the \emph{unitization} of $A$, $\tilde A$, 
the canonical unital 
C*-algebra that has $A$ as a maximal ideal and such that 
the quotient $\tilde A/A$ is isomorphic to $\bbC$ (see \cite[Lemma~2.3]{FaWo:Set}). 
 If $u$ is a unitary in $A$ (or $\tilde A$) then 
 \[
( \Ad u) a=uau^*
\]
defines an inner automorphism of $A$. 
 
  Two pure states $\phi$ and
$\psi$ are equivalent, $\phi\sim_A\psi$,  if there exists a
 unitary $u$ in $A$ (or  $\tilde A$) such that
$\phi=\psi\circ\Ad u$.

\begin{thm} \label{T1} Assume $A$ is a separable C*-algebra. Then 
 $\sim_A$ is either smooth or there is a continuous map 
\[
\Phi\colon [0,1]^{\bbN}\to \hat A
\]
such that $\alpha-\beta\in \ell_2$ if and only if $\Phi(\alpha)\sim_A \Phi(\beta)$. 
\end{thm}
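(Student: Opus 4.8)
The plan is to establish a dichotomy via a Glimm–Effros-style analysis of the orbit equivalence relation $\sim_A$. The key observation is that $\sim_A$ is the orbit equivalence relation induced by the action of the unitary group of $\tilde A$ on the pure state space $\bbP(A)$ by $u\cdot\phi = \phi\circ\Ad u^*$. This is a continuous action of a Polish group on a Polish space, so $\sim_A$ is an analytic (indeed $F_\sigma$ or better) orbit equivalence relation. The strategy is to show that if $\sim_A$ is \emph{not} smooth, then the action exhibits turbulence-like behavior sufficient to embed the canonical turbulent relation $[0,1]^{\bbN}/\ell_2$.

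First I would reduce to the irreducible representation picture. By the GNS construction, each pure state $\phi$ yields an irreducible representation $\pi_\phi$ on a Hilbert space, and $\phi\sim_A\psi$ corresponds to unitary equivalence of the representations together with a matching cyclic vector up to the unitary action. The point is to locate, inside $\bbP(A)$, a \emph{continuous} family of pure states parametrized by a ball or cube on which the equivalence $\sim_A$ is controlled by an $\ell_2$-type condition. Concretely, for a fixed irreducible representation $\pi$ on $\calH$ with unit cyclic vector $\xi$, the vector states $\phi_\eta(a) = \langle \pi(a)\eta,\eta\rangle$ for unit vectors $\eta\in\calH$ are all equivalent when $\pi(A)\supseteq \mathcal K(\calH)$ (the compacts), but when the algebra is `thin' the equivalence becomes sensitive to the asymptotic behavior of $\eta$. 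The idea is to build $\Phi$ by choosing, for each $\alpha\in[0,1]^{\bbN}$, a unit vector $\eta_\alpha = \sum_n c_n(\alpha)\, e_n$ in a suitable orthonormal system $(e_n)$, arranging the coefficients so that the two vectors $\eta_\alpha$ and $\eta_\beta$ induce equivalent pure states precisely when they are `close at infinity,' i.e.\ when $\alpha-\beta\in\ell_2$.

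The construction of $\Phi$ proceeds by encoding the $\ell_2$ distance into a genuinely turbulent action. I would fix a concrete non-type-I or otherwise non-smooth piece witnessing non-smoothness and, using the Glimm–Effros dichotomy as a springboard, extract a Borel (hence by standard tricks continuous on a $G_\delta$, then on all of $[0,1]^{\bbN}$ after reparametrization) assignment $\alpha\mapsto \Phi(\alpha)\in\bbP(A)$ such that the unitary $u$ implementing $\Phi(\alpha)\sim_A\Phi(\beta)$ exists if and only if a summability condition equivalent to $\alpha-\beta\in\ell_2$ holds. The mechanism mirrors the standard presentation of $[0,1]^{\bbN}/\ell_2$ as the orbit relation of the translation action of $\ell_2$ on $[0,1]^{\bbN}$: I aim to implement the additive group $\ell_2$ (or a dense subgroup thereof) as a group of unitaries acting on the pure states so that the orbit of $\Phi(\alpha)$ is exactly $\{\Phi(\beta) : \beta-\alpha\in\ell_2\}$, using phase multipliers $u = \mathrm{diag}(e^{i\theta_n})$ whose action on vector states shifts the coordinates.

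The main obstacle will be the non-smoothness hypothesis yields only that $E_0\leq_B\sim_A$ \emph{a priori}, which is strictly weaker than the desired reduction of $[0,1]^{\bbN}/\ell_2$; upgrading from a countable-group (smooth versus $E_0$) dichotomy to a genuinely turbulent, non-classifiable-by-countable-structures conclusion is the crux. I expect to handle this by exploiting the specific structure of the unitary action: the acting group is not countable but a full unitary group, and I would show that whenever $\sim_A$ fails to be smooth, the relevant representation theory forces an uncountable, connected `direction space' of perturbations, which I identify with a copy of $\ell_2$. Making the continuity of $\Phi$ and the exact equivalence `$\alpha-\beta\in\ell_2 \iff \Phi(\alpha)\sim_A\Phi(\beta)$' hold simultaneously — rather than merely a one-directional reduction — is the delicate point, and it is where a careful choice of the orthonormal coefficient system and a verification that no `unexpected' unitary implements an equivalence outside $\ell_2$ will be required.
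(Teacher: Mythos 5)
There is a genuine gap, and in fact the central mechanism you propose cannot work. You plan to fix a single irreducible representation $\pi$ of $A$ on $\calH$ and produce inequivalent pure states as vector states $\phi_{\eta_\alpha}(a)=(\pi(a)\eta_\alpha|\eta_\alpha)$ for a family of unit vectors $\eta_\alpha$, hoping that for a ``thin'' algebra the equivalence becomes sensitive to the tail behaviour of the coefficients. It does not: by Kadison's transitivity theorem, for \emph{any} irreducible representation of \emph{any} C*-algebra and any two unit vectors $\xi,\eta\in\calH$ there is a unitary $u$ in $\tilde A$ with $\pi(u)\xi=\eta$, whence $\omega_\xi\circ\pi=(\omega_\eta\circ\pi)\circ\Ad u$. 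So all vector states of one irreducible representation lie in a single $\sim_A$-class, and your family $\Phi(\alpha)=\phi_{\eta_\alpha}$ collapses to a point in the quotient. The inequivalence has to come from genuinely inequivalent irreducible representations, not from moving the vector inside one of them. Relatedly, the ``phase multiplier'' unitaries $\mathrm{diag}(e^{i\theta_n})$ you invoke need not lie in (the unitization of) $A$, so they do not implement $\sim_A$.

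The second gap is that the crux you correctly identify --- upgrading the non-smoothness hypothesis, which a priori only yields $E_0\,\leq_B\,\sim_A$, to a reduction of $[0,1]^{\bbN}/\ell_2$ --- is left as ``I expect to handle this,'' with no argument. The paper resolves it by invoking a much stronger structural fact than the Glimm--Effros dichotomy, namely Glimm's theorem: if $\sim_A$ is not smooth then $A$ is not type I, and then $\sim_{M_{2^\infty}}\,\leq_B\,\sim_A$ where $M_{2^\infty}$ is the CAR algebra. The whole problem is thereby transferred to one concrete algebra, where the reduction is explicit: $\Phi(\vec\alpha)=\bigotimes_n\omega_{(\cos\alpha_n,\sin\alpha_n)}$, a product of pure states on the tensor factors. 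Two such product states are equivalent if and only if $\prod_n|\cos(\alpha_n-\beta_n)|>0$, which is equivalent to $\sum_n(\alpha_n-\beta_n)^2<\infty$; the forward implication is proved by an explicit norm estimate $\|1-u_n\|\leq\sqrt{2(1-|(\xi_n|\eta_n)|)}$ producing a convergent infinite product of unitaries, and the converse by showing that divergence of the product forces $\|\Phi(\vec\alpha)-\Phi(\vec\beta)\circ\Ad u\|$ close to $2$ for every unitary $u$ in a finite stage, contradicting the Glimm--Kadison criterion. None of this quantitative content appears in your proposal, and without the passage through $M_{2^\infty}$ (or an equivalent structural input) the dichotomy does not follow from non-smoothness alone.
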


\begin{coro}  Assume $A$ is a separable C*-algebra. Then either $\sim_A$ is  smooth 
or it cannot be classified by countable structures. 
\end{coro}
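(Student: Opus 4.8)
The plan is to split on whether $A$ is type~I and to reduce the non-type-I case to a single product-state construction on the CAR algebra. First I would recall Glimm's theorem in the form asserting that for a separable $A$ the following are equivalent: $A$ is type~I; the Mackey Borel structure on $\hat A$ is standard; and $\sim_A$ is smooth. Thus if $\sim_A$ is not smooth then $A$ is not type~I, and by Glimm's structure theorem there is a C*-subalgebra $B \subseteq A$ together with an ideal $J \triangleleft B$ and a $*$-isomorphism $B/J \cong M_{2^\infty}$, where $M_{2^\infty} = \bigotimes_n M_2(\bbC)$ is the CAR algebra. This subquotient is the source of all the inequivalent pure states I will use.

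The model case is $A = M_{2^\infty}$ itself. For a unit vector $\xi \in \bbC^2$ let $\omega_\xi$ be the associated vector state on $M_2(\bbC)$, and choose a continuous parametrization $t \mapsto \xi(t)$ of a circle of unit vectors with $\langle \xi(s), \xi(t)\rangle = \cos\big(\tfrac{\pi}{2}(s-t)\big)$. For $\alpha \in [0,1]^{\bbN}$ put $\omega_\alpha = \bigotimes_n \omega_{\xi(\alpha_n)}$, a pure product state on $M_{2^\infty}$. Since $\omega_\alpha$ is determined on the dense union $\bigcup_N \bigotimes_{n\le N} M_2(\bbC)$ by the continuous formula $\omega_\alpha(x_1\otimes\cdots\otimes x_N) = \prod_{n\le N}\omega_{\xi(\alpha_n)}(x_n)$, the assignment $\alpha \mapsto \omega_\alpha$ is weak*-continuous. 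Von Neumann's criterion for infinite tensor products, equivalently Powers' quasi-equivalence criterion, shows that the irreducible GNS representations $\pi_{\omega_\alpha}$ and $\pi_{\omega_\beta}$ are unitarily equivalent if and only if $\sum_n\big(1-|\langle \xi(\alpha_n),\xi(\beta_n)\rangle|\big)<\infty$; with the chosen parametrization $1-\cos\big(\tfrac{\pi}{2}(\alpha_n-\beta_n)\big)\asymp (\alpha_n-\beta_n)^2$, so this is exactly $\alpha-\beta\in\ell_2$. Finally, since $M_{2^\infty}$ is unital, unitary equivalence of the GNS representations of two pure states coincides with $\sim_{M_{2^\infty}}$: Kadison transitivity produces a single unitary $u \in M_{2^\infty}$ with $\omega_\alpha = \omega_\beta\circ\Ad u$ whenever the representations are equivalent, and one need not (and in general cannot) take $u$ to be the divergent product $\bigotimes_n u_n$. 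This already proves the theorem for $A = M_{2^\infty}$.

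For general non-type-I $A$ I would transport this construction along $B/J \cong M_{2^\infty}$. Composing with the quotient map $q\colon B \to B/J$ turns each $\omega_\alpha$ into a pure state $\omega_\alpha \circ q$ of $B$, and every pure state of $B$ extends to a pure state of $A$; arranging these extensions to depend continuously on $\alpha$ (using that the $\omega_\alpha\circ q$ form a continuous field together with a selection of extreme extensions) yields a continuous $\Phi\colon[0,1]^{\bbN}\to\bbP(A)$, which composed with the map to the spectrum gives $\Phi\colon[0,1]^{\bbN}\to\hat A$.

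The main obstacle is showing that $\Phi$ is a faithful reduction, i.e.\ that equivalence in the huge algebra $A$ detects only the CAR data. The forward implication, $\alpha-\beta\in\ell_2\Rightarrow\Phi(\alpha)\sim_A\Phi(\beta)$, follows once the extensions are chosen so that $\Phi(\alpha)$ and $\Phi(\beta)$ have unitarily equivalent GNS representations of $A$, after which Kadison transitivity supplies the implementing unitary in $\tilde A$. The difficult direction is the converse: a unitary $u\in\tilde A$ implementing $\Phi(\alpha)=\Phi(\beta)\circ\Ad u$ need not lie in $B$, so one cannot simply push it down to $B/J$. To rule this out I would attach to each $\Phi(\alpha)$ an invariant built from the transition probabilities $|\langle\xi(\alpha_n),\,\cdot\,\rangle|$ read off from the GNS vector of $\Phi(\alpha)$ on the matrix units of $B/J$, an essentially $\ell_2$-valued Bures-type quantity, and verify that this invariant is unchanged under composition with any inner automorphism $\Ad u$ of $A$. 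Its value separates the classes of $\alpha$ and $\beta$ precisely when $\alpha-\beta\notin\ell_2$, giving non-equivalence. Packaging this invariance so that it survives arbitrary unitaries of $\tilde A$—rather than only those respecting the subquotient—is where the real work lies, and it is exactly the point at which the turbulence of $[0,1]^{\bbN}/\ell_2$ is matched to the geometry of the pure state space.
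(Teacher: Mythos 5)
Your model case on $M_{2^\infty}$ is essentially the paper's argument (Theorem~\ref{T5}): the product states $\bigotimes_n\omega_{\xi(\alpha_n)}$, the continuity of $\alpha\mapsto\omega_\alpha$ through finite tensor factors, and the criterion $\sum_n\bigl(1-|(\xi(\alpha_n)|\xi(\beta_n))|\bigr)<\infty$ all appear there. The paper proves the criterion by hand rather than citing Powers: an explicit estimate $\|1-u_n\|\le\sqrt{2(1-|\cos(\alpha_n-\beta_n)|)}$ makes the partial products $\bigotimes_{j\le n}u_j$ a Cauchy sequence whose limit implements the equivalence (so, contrary to your parenthetical remark, the implementing unitary \emph{is} taken to be the limit of the partial tensor products), and the converse uses the Glimm--Kadison fact that pure states at norm distance less than $2$ are equivalent. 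That part of your proposal is sound.

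The genuine gap is the passage from $M_{2^\infty}$ to a general non-smooth $A$. The paper does not push the product states through a subquotient $B/J\cong M_{2^\infty}$ by hand; it invokes Glimm's theorem in the form of Proposition~\ref{P1}, which already delivers a Borel reduction $\sim_{M_{2^\infty}}\,\leq_B\,\sim_A$ whenever $\sim_A$ is not smooth --- the hard analytic content (that suitably chosen inequivalent pure states of the subquotient extend to inequivalent pure states of $A$) is precisely what Glimm proved. You propose instead to re-derive this: extend the states $\omega_\alpha\circ q$ to $A$ and then show that a unitary of $\tilde A$ having nothing to do with $B$ cannot identify $\Phi(\alpha)$ and $\Phi(\beta)$ when $\alpha-\beta\notin\ell_2$. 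You correctly flag this as ``where the real work lies,'' but the proposed ``Bures-type invariant \ldots unchanged under composition with any inner automorphism of $A$'' is never constructed, and it is not clear how a quantity read off the matrix units of $B/J$ could be invariant under $\Ad u$ for arbitrary $u\in\tilde A$; the continuous (or Borel) selection of pure-state extensions is likewise asserted rather than proved. As written, the proposal establishes the theorem only for $A=M_{2^\infty}$; for general $A$ you should either cite Glimm's dichotomy as the paper does or actually supply the missing invariance argument. A smaller omission: the corollary also requires the closing step that $[0,1]^{\bbN}/\ell_2$ is turbulent and that, by Hjorth's theorem, any equivalence relation to which a turbulent one Borel-reduces is not classifiable by countable structures; you allude to turbulence but never close this loop.
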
 

\begin{proof} 
By \cite{Hj:Book} it suffices to show that a turbulent orbit equivalence relation 
is Borel-reducible to $\sim_A$ if $\sim_A$ is not smooth. 
The equivalence relation $[0,1]^{\bbN}/\ell_2$ is well-known to 
be turbulent (e.g., \cite{HjKe:New}) 
and the conclusion follows by Theorem~\ref{T1}. 
\end{proof} 

This result was independently  
proved in \cite[Theorem~2.8]{KeLiPi:Turbulence} by directly showing the turbulence. 
As pointed out in \cite[\S3]{KeLiPi:Turbulence}, it implies an analogous result of 
Hjorth (\cite{Hj:Non-smooth}) on irreducible representations of discrete groups, 
as well as its strengthening to locally compact groups. 

\section{Proof of Theorem~\ref{T1}}

Recall that the CAR  (Canonical Anticommutation Relations) algebra (also know as 
the Fermion algebra, or $M_{2^\infty}$) is defined as the infinite tensor product
\[
M_{2^\infty}=\bigotimes_{n\in \bbN} M_2(\bbC)
\]
where $M_2(\bbC)$ is the algebra of $2\times 2$ matrices. 
Alternatively, one may think of $M_{2^\infty}$ as the direct limit of $2^n\times 2^n$ matrix 
algebras $M_{2^n}(\bbC)$ for $n\in \bbN$.

The following 
analogue of the Glimm--Effros dichotomy is an immediate consequence
of \cite{Glimm:On} (Notably, the key combinatorial device in the proof
of \cite{HaKeLo:GE} comes from Glimm). 

\begin{prop} \label{P1} If $A$ is a  separable C*-algebra then exactly one
of the following applies.
\begin{enumerate}
\item $\sim_A$ is smooth.
\item $\sim_{M_{2^\infty}}\,\leq_B\,\sim_A$. \qed
\end{enumerate}
\end{prop}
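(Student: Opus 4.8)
The plan is to derive this from Glimm's classical work on type I C*-algebras. Recall that Glimm proved a deep dichotomy: a separable C*-algebra $A$ is either type I (equivalently, its irreducible representations are classified up to unitary equivalence by a standard Borel structure on $\hat A$) or it is not type I, in which case $A$ contains a subalgebra with the CAR algebra $M_{2^\infty}$ as a quotient, and the representation theory of $A$ is "as complicated as possible." The key point is that non-type-I behavior propagates $M_{2^\infty}$ into $A$ in a way that lets us transport equivalence of pure states. Let me sketch the two directions.

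First I would handle the smooth case. If $A$ is type I, then by Glimm's theorem the Mackey Borel structure on $\hat A$ is standard, which means that the equivalence classes of irreducible representations (equivalently, pure states up to $\sim_A$) are parametrized in a Borel way by a standard Borel space. Concretely, one checks that there is a Borel map from $\bbP(A)$ to a Polish space that is constant on $\sim_A$-classes and separates distinct classes, witnessing that $\sim_A\,\leq_B\,\id$; hence $\sim_A$ is smooth. This uses that for type I algebras the GNS representation attached to a pure state falls into one of countably many unitary-equivalence types, each itself admitting a standard Borel cross-section, so the whole equivalence relation is smooth. I would cite Glimm's structure theorem for the reduction of the type I case to smoothness.

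For the non-smooth case, the plan is to use the contrapositive: if $\sim_A$ is not smooth, then $A$ is not type I, and Glimm's analysis produces a copy of (a subalgebra mapping onto) $M_{2^\infty}$ sitting inside $A$ in a representation-theoretically faithful manner. The task is then to produce a Borel reduction $\sim_{M_{2^\infty}}\,\leq_B\,\sim_A$. The natural strategy is to take a $*$-homomorphism or an explicit embedding-like construction arising from Glimm's proof, and use it to pull back or push forward pure states: given a pure state $\phi$ on $M_{2^\infty}$, extend or transport it to a pure state $\hat\phi$ on $A$ via the structure Glimm provides, checking that this assignment is Borel and that $\phi\sim_{M_{2^\infty}}\psi$ holds if and only if $\hat\phi\sim_A\hat\psi$. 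The unitaries implementing equivalence on the $M_{2^\infty}$ side must correspond, through the embedding, to unitaries in $A$ (or its unitization $\tilde A$) implementing equivalence on the $A$ side, and conversely any $A$-unitary conjugacy must restrict to witness $M_{2^\infty}$-equivalence.

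The main obstacle I anticipate is precisely this last faithfulness of the reduction: ensuring that the transport of pure states is \emph{injective on the level of equivalence classes} in both directions. The forward direction (preserving equivalence) tends to be automatic once the embedding respects the relevant automorphisms, but the reverse direction (reflecting equivalence) requires that no two $\sim_{M_{2^\infty}}$-inequivalent pure states become $\sim_A$-equivalent after transport. This is where the combinatorial heart of Glimm's argument — the device that the parenthetical remark credits as the source of the Glimm--Effros construction — does the work: it guarantees that the copy of $M_{2^\infty}$ is embedded in a "rigid" enough fashion that distinct equivalence classes stay distinct. Verifying the Borel measurability of the map $\phi\mapsto\hat\phi$ is routine by contrast, following from the continuity or Borelness of the GNS construction and of the extension procedure. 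Once both directions and measurability are in hand, the dichotomy follows, with exactly one of the two alternatives holding since smoothness of $\sim_A$ is incompatible with $\sim_{M_{2^\infty}}\,\leq_B\,\sim_A$ (the latter being non-smooth, as $M_{2^\infty}$ is the prototypical non-type-I algebra).
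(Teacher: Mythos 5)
Your proposal is correct and follows essentially the same route as the paper, which gives no argument beyond declaring the proposition ``an immediate consequence'' of Glimm's theorem \cite{Glimm:On}: type I gives a standard Mackey Borel structure and hence smoothness, while non-type-I gives Glimm's subalgebra-with-$M_{2^\infty}$-quotient and the induced transport of pure states. Your sketch simply makes explicit the steps the paper leaves to Glimm, including the one genuinely nontrivial point (that the transported states reflect, not just preserve, equivalence), which is indeed where Glimm's combinatorial construction does the work.
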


We shall prove that $\sim_{M_{2^\infty}}$ is turbulent in the sense
of Hjorth.

\begin{lemma} \label{L1} If $\xi$ and $\eta$ are unit vectors in $H$
then
\begin{equation}%\label{E.L1}
\inf\{\|I-u\|: u\text{ unitary in $\cB(H)$ and }(u\xi|\eta)=1\}
=\sqrt{2(1-|(\xi|\eta)|)}.\tag{*}
\end{equation}
\end{lemma}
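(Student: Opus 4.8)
The plan is to reduce everything to the two–dimensional subspace $V=\SPAN\{\xi,\eta\}$ and to exploit the elementary identity $\|I-u\|^2=\|2I-(u+u^*)\|$, valid for unitary $u$; since $2I-(u+u^*)\ge 0$ this yields $\|I-u\|^2=\sup_{\|\zeta\|=1}2\bigl(1-\operatorname{Re}(u\zeta|\zeta)\bigr)$, which already explains the shape $\sqrt{2(1-\cdots)}$ of the answer. Both the right-hand side of (*) and the underlying notion of state equivalence are insensitive to the phase of $\eta$, so I would normalise $(\xi|\eta)=r:=|(\xi|\eta)|\ge 0$ and look for unitaries with $u\xi=\eta$. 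Outside $V$ one loses nothing by taking $u$ to be the identity: the constraint involves only $\xi,\eta\in V$, and the lower bound below, being valid for arbitrary $u$, shows that no action on $V^\perp$ can help.

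For the lower bound I would argue directly, with no reduction at all. For every admissible $u$,
\[
\|I-u\|\ \ge\ \|(I-u)\xi\|\ =\ \|\xi-u\xi\|.
\]
The constraint $(u\xi|\eta)=1$ forces $u\xi$ to be a unimodular multiple of $\eta$ (equality in Cauchy--Schwarz for the unit vectors $u\xi$ and $\eta$), so $\|\xi-u\xi\|\ge\min_{|\zeta|=1}\|\xi-\zeta\eta\|$. Expanding $\|\xi-\zeta\eta\|^2=2-2\operatorname{Re}\bigl(\bar\zeta(\xi|\eta)\bigr)$ and choosing the phase of $\zeta$ optimally gives $\min_{|\zeta|=1}\|\xi-\zeta\eta\|=\sqrt{2(1-|(\xi|\eta)|)}$. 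Hence the left-hand side of (*) is at least the right-hand side, and this inequality holds verbatim, with no normalisation.

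For the upper bound, and to see that the infimum is attained, I would exhibit an explicit unitary. If $r=1$ then $\eta=\xi$ and $u=I$ works. If $r<1$, write $\eta=r\xi+\sqrt{1-r^2}\,\xi'$ with $\xi'\in V$ a unit vector orthogonal to $\xi$, and define $u$ on $V$ by the planar rotation $\xi\mapsto\eta$, $\ \xi'\mapsto-\sqrt{1-r^2}\,\xi+r\xi'$, extended by the identity on $V^\perp$. Then $u$ is unitary, $u\xi=\eta$ so that $(u\xi|\eta)=1$, and $u|_V$ has eigenvalues $r\pm i\sqrt{1-r^2}=e^{\pm i\gamma}$ with $\cos\gamma=r$; therefore $\|I-u\|=\|I_V-u|_V\|=|1-e^{i\gamma}|=\sqrt{2-2\cos\gamma}=\sqrt{2(1-r)}$, which matches the lower bound.

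The one delicate point is the phase normalisation of the first paragraph: read literally, $(u\xi|\eta)=1$ forces $u\xi=\eta$ and hence the value $\sqrt{2\bigl(1-\operatorname{Re}(\xi|\eta)\bigr)}$, and the stated formula with $|(\xi|\eta)|$ is precisely the value obtained once the physically irrelevant phase of $\eta$ is chosen so that $(\xi|\eta)\ge0$; I would record this normalisation explicitly at the outset. Everything else is a routine $2\times2$ eigenvalue computation together with the degenerate case $r=1$, so I do not anticipate a genuine obstacle beyond organising the reduction to $V$ cleanly.
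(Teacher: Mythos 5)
Your proof is correct and takes essentially the same route as the paper's: the lower bound by applying $I-u$ to $\xi$ and minimizing the distance from $\xi$ to the unit circle in $\bbC\eta$, and the upper bound via the explicit planar rotation on $\operatorname{span}\{\xi,\eta\}$, whose norm is computed from its eigenvalues. Your closing remark that the literal constraint $(u\xi|\eta)=1$ forces $u\xi=\eta$ and hence the value $\sqrt{2(1-\operatorname{Re}(\xi|\eta))}$, so that a phase normalisation $(\xi|\eta)\ge 0$ is needed to obtain the stated formula with $|(\xi|\eta)|$, is accurate and is a point the paper's proof leaves implicit (it tacitly treats $t=(\xi|\eta)$ as a nonnegative real).
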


\begin{proof} Let $t=(\xi|\eta)$.
Let $\xi'=\frac 1{\|\proj_{\bbC\eta}\xi\|}\proj_{\bbC\eta}\xi$. Then the square of the left-hand side of  (*)
 is greater than or equal
to"
\[
\|\xi-\xi'\|^2=\|\xi\|^2+\|\xi'\|^2-\frac
2{(\xi|\eta)}(\xi|(\xi|\eta)\eta)=2-2|t|.
\]
For $\leq$ let $\zeta$ be the unit vector orthogonal to $\xi$ such
that
\[
\eta=t\xi+\sqrt{1-t^2}\zeta
\]
and let $u$ be the unitary given by $\begin{pmatrix} t &
-\sqrt{1-t^2}\\ \sqrt{1-t^2} & t\end{pmatrix}$ on the span of $\xi$
and $\zeta$ and identity on its orthogonal complement. Then
$u\xi=\eta$ and a straightforward  computation gives $\|I-u\|^2=2-2t$
as required. \end{proof}

If $\xi$ is a unit vector in a Hilbert space then by $\omega_\xi$ we
denote the vector state $a\mapsto (a\xi|\xi)$.
If $\xi_i$ is a unit vector in $H_i$ for  $1\leq i\leq m$ then $\xi=\bigotimes_{i=1}^m \xi_i$
is a unit vector in $H=\bigotimes_{i=1}^m \xi_i$ and $\omega_\xi$ is a vector state on $\cB(H)$.

\begin{lemma}\label{L2} If $H_i$  is a Hilbert space
and $\xi_i,\eta_i$ are unit vectors in $H_i$ for $1\leq i\leq m$ then
\begin{multline*}
\textstyle\inf\{\|I-u\|: u\text{ unitary and
}\omega_{\bigotimes_{i=1}^m \xi_i}=\omega_{\bigotimes_{i=1}^m
\eta_i}\circ\Ad u\}\\
= 2\sqrt{2(1-\textstyle\prod_{i=1}^m|(\xi_i|\eta_i)|)}.
\end{multline*}
\end{lemma}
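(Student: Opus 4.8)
The plan is to reduce Lemma~\ref{L2} to the single--vector identity of Lemma~\ref{L1} applied to the product vectors $\xi=\bigotimes_{i=1}^m\xi_i$ and $\eta=\bigotimes_{i=1}^m\eta_i$, which are unit vectors in $H=\bigotimes_{i=1}^m H_i$.

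First I would eliminate the states in favour of vectors. For any unitary $u$ one has $\omega_\eta\circ\Ad u=\omega_{u^*\eta}$, so the defining condition $\omega_\xi=\omega_\eta\circ\Ad u$ is the same as $\omega_\xi=\omega_{u^*\eta}$. Two unit vectors induce the same vector state on $\cB(H)$ exactly when they differ by a unimodular scalar, so the condition becomes $u^*\eta\in\bbC\xi$, equivalently $|(u\xi|\eta)|=1$. Thus the infimum is taken over precisely those unitaries carrying $\xi$ into the line $\bbC\eta$. Note also that replacing $\eta$ by a unimodular multiple leaves both the state $\omega_\eta$ and the modulus $|(\xi|\eta)|$ unchanged, so I may freely normalise below.

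Next I would compute the overlap of the product vectors. Since the inner product on a tensor product factors, $(\xi|\eta)=\prod_{i=1}^m(\xi_i|\eta_i)$, and hence $|(\xi|\eta)|=\prod_{i=1}^m|(\xi_i|\eta_i)|$. After the normalisation of the previous step I may assume $(\xi|\eta)\ge 0$, so the problem is now \emph{identical} to the one solved in Lemma~\ref{L1} for the single pair $\xi,\eta$, whose overlap is $t=\prod_{i=1}^m|(\xi_i|\eta_i)|$. In particular the tensor structure enters only through this one scalar.

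Finally I would invoke the two halves of Lemma~\ref{L1}. For the lower bound, $\|I-u\|\ge\|(I-u)\xi\|=\|\xi-u\xi\|$, and since $u\xi\in\bbC\eta$ an optimisation over the phase gives $\|\xi-u\xi\|\ge\sqrt{2(1-|(\xi|\eta)|)}$; the matching upper bound is realised by the explicit unitary of Lemma~\ref{L1}, the rotation taking $\xi$ to $\eta$ inside their two--dimensional span, extended by the identity on the orthogonal complement. Substituting $|(\xi|\eta)|=\prod_{i=1}^m|(\xi_i|\eta_i)|$ yields the value $\sqrt{2\bigl(1-\prod_{i=1}^m|(\xi_i|\eta_i)|\bigr)}$ of Lemma~\ref{L1} for these vectors. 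The step I expect to need the most care is the first one: one must verify that the vector--state constraint over the whole of $\cB(H)$ genuinely collapses to the rank--one condition $u\xi\in\bbC\eta$, and, for the upper bound, that the optimal rotation may be chosen within the admissible class of unitaries. Once this is secured, Lemma~\ref{L1} closes the computation, and the infimum depends on the individual factors only through the product $\prod_{i=1}^m|(\xi_i|\eta_i)|$.
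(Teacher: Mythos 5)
Your proof is correct and follows essentially the same route as the paper: reduce to Lemma~\ref{L1} applied to the product vectors, using that a vector state determines its vector only up to a unimodular scalar and that $(\bigotimes_{i=1}^m\xi_i\mid\bigotimes_{i=1}^m\eta_i)=\prod_{i=1}^m(\xi_i|\eta_i)$; you merely spell out the details that the paper's two-line proof leaves implicit. Note that the value you (correctly) obtain is $\sqrt{2(1-\prod_{i=1}^m|(\xi_i|\eta_i)|)}$, without the leading factor $2$ that appears in the displayed statement of Lemma~\ref{L2}; that factor is evidently a typo, as it is absent both from Lemma~\ref{L1} and from the way the estimate is invoked in the proof of Theorem~\ref{T5}.
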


\begin{proof} The case when $m=1$ follows from Lemma~\ref{L1} and the fact that $\omega_\xi=\omega_{\alpha\xi}$ when $|\alpha|=1$.
Since $(\bigotimes_{i=1}^m\xi_i|\bigotimes_{i=1}^m
\eta_i)=\prod_{i=1}^m (\xi_i|\eta_i)$, the general case is an
immediate consequence of Lemma~\ref{L1}.
\end{proof}

\begin{thm} \label{T5} There is a continuous map $\Phi\colon (-\frac \pi2,
\frac \pi2)^\bbN\to \bbP(M_{2^\infty})$ such that for all
$\vec\alpha$ and $\vec\beta$ in the domain we have
\[
\sum_n (\alpha_n-\beta_n)^2<\infty\Leftrightarrow
\Phi(\vec\alpha)\sim_{M_{2^\infty}}\Phi(\vec\beta).
\]
\end{thm}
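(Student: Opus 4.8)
The plan is to realize $\ell_2$-equivalence on the open cube as unitary equivalence of \emph{product vector states} on $M_{2^\infty}=\bigotimes_n M_2(\bbC)$. For $\alpha\in(-\frac\pi2,\frac\pi2)$ let $\xi(\alpha)=(\cos\frac\alpha2,\sin\frac\alpha2)\in\bbC^2$, a unit vector depending continuously on $\alpha$, and set $\Phi(\vec\alpha)=\bigotimes_n\omega_{\xi(\alpha_n)}$. Since each $\omega_{\xi(\alpha_n)}$ is a pure state on $M_2(\bbC)$, the product is a pure state on $M_{2^\infty}$, so $\Phi(\vec\alpha)\in\bbP(M_{2^\infty})$; continuity of $\Phi$ is immediate because for $a$ in the dense subalgebra $\bigcup_m M_{2^m}(\bbC)$ the value $\Phi(\vec\alpha)(a)$ depends continuously on finitely many coordinates. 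The key quantity is the overlap $t_n=(\xi(\alpha_n)|\xi(\beta_n))=\cos\frac{\alpha_n-\beta_n}2$, which lies in $(0,1]$ and equals $1$ exactly when $\alpha_n=\beta_n$; the half-angle keeps every overlap a \emph{positive} real, avoiding the spurious near-$1$ behaviour of $|\cos(\alpha_n-\beta_n)|$ as the coordinates approach the boundary. Because $1-\cos\frac\theta2\asymp\theta^2$ uniformly for $\theta\in(-\pi,\pi)$, the theorem reduces to the criterion
\[
\Phi(\vec\alpha)\sim_{M_{2^\infty}}\Phi(\vec\beta)\iff\sum_n(1-t_n)<\infty.
\]

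For the implication $(\Leftarrow)$ I would argue as follows. If $\sum_n(1-t_n)<\infty$ then the von Neumann product $\prod_n t_n$ converges to a nonzero limit, so by von Neumann's theory of incomplete tensor products the product vectors $\bigotimes_n\xi(\alpha_n)$ and $\bigotimes_n\xi(\beta_n)$ lie in the \emph{same} incomplete tensor product Hilbert space; equivalently, the (irreducible, since the states are pure) GNS representations of $\Phi(\vec\alpha)$ and $\Phi(\vec\beta)$ are unitarily equivalent. Viewing both product vectors as cyclic unit vectors in one irreducible representation $\pi$ of $M_{2^\infty}$, Kadison's transitivity theorem supplies a unitary $u\in M_{2^\infty}$ with $\pi(u)\bigl(\bigotimes_n\xi(\beta_n)\bigr)=\bigotimes_n\xi(\alpha_n)$, whence $\Phi(\vec\alpha)=\Phi(\vec\beta)\circ\Ad u^*$, that is, $\Phi(\vec\alpha)\sim_{M_{2^\infty}}\Phi(\vec\beta)$.

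For the implication $(\Rightarrow)$ I would prove the contrapositive by a tail argument. Suppose $\Phi(\vec\alpha)=\Phi(\vec\beta)\circ\Ad u$ for a unitary $u\in M_{2^\infty}$, fix $\e>0$, and choose $m$ and a unitary $u'\in M_{2^m}(\bbC)$ with $\|u-u'\|<\e$. Every $a$ in the tail algebra $\bigotimes_{i>m}M_2(\bbC)$ commutes with $u'$, so $\Ad u'$ fixes it; estimating $\|uau^*-u'au'^*\|\le 2\e\|a\|$ then shows that $\Phi(\vec\alpha)$ and $\Phi(\vec\beta)$ differ by at most $2\e$ in norm on this tail. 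Restricting further to $\bigotimes_{m<i\le M}M_2(\bbC)$ and using the elementary identity $\|\omega_\zeta-\omega_{\zeta'}\|=2\sqrt{1-|(\zeta|\zeta')|^2}$ (the finite-dimensional computation behind Lemma~\ref{L2}) gives $2\sqrt{1-\prod_{m<i\le M}t_i^2}\le 2\e$. But $\sum_n(1-t_n)=\infty$ forces $\prod_{i>m}t_i=0$ for every $m$, so letting $M\to\infty$ yields $2\le 2\e$, a contradiction once $\e<1$. Hence the states are inequivalent.

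The main obstacle is the direction $(\Leftarrow)$: one cannot simply tensor the local rotations of Lemma~\ref{L1}, since the product $\bigotimes_n u_n$ converges only under the strictly stronger hypothesis $\sum_n|\alpha_n-\beta_n|<\infty$, whereas we are given only square-summability. The equivalence in the $\ell_2$-but-not-$\ell_1$ regime is genuinely a property of the representation rather than of any explicit local unitary, which is exactly why it must be extracted from the incomplete-tensor-product criterion together with Kadison transitivity — and it is this phenomenon that ultimately makes $\sim_{M_{2^\infty}}$ turbulent.
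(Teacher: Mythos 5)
Your proposal is correct, and on the crucial implication it takes a genuinely different route from the paper. The global architecture is the same: both arguments encode $\vec\alpha$ as an infinite product of vector states on $\bigotimes_n M_2(\bbC)$ and translate $\ell_2$-closeness of the angle sequences into positivity of the product of the overlaps $(\xi_n|\eta_n)$; your half-angle normalization is a small but real improvement, since it keeps every overlap in $(0,1]$ and removes the boundary cases where $\cos(\alpha_n-\beta_n)$ vanishes or is negative. The divergence is in the direction ``$\ell_2$-close implies equivalent.'' The paper builds the implementing unitary explicitly as the norm limit of the partial tensor products $v_n=\bigotimes_{j\le n}u_j$ of the local rotations from Lemma~\ref{L1}, and bounds $\|v_m-v_n\|$ by $\sqrt{2(1-\prod_{j>m}\cos(\alpha_j-\beta_j))}$. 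Your reservation about this step is well founded: Lemma~\ref{L2} only gives the \emph{infimum} of $\|I-w\|$ over all unitaries $w$ carrying one product state to the other, whereas the specific unitary $\bigotimes_{j=m+1}^{n}u_j$ has eigenvalues $e^{i\sum_j\pm(\alpha_j-\beta_j)}$ over all sign patterns, so $\|1-\bigotimes_{j=m+1}^{n}u_j\|$ is governed by the accumulated angle $\sum_{j>m}|\alpha_j-\beta_j|$ rather than by the overlap product, and no re-phasing shrinks that spectral spread; the partial products need not be Cauchy when the differences are only square-summable. Your detour through von Neumann's incomplete tensor products plus Kadison transitivity (equivalently, the fact --- quoted elsewhere in the paper --- that pure states with unitarily equivalent GNS representations are conjugate by a unitary of the algebra) is the robust way to produce the unitary in the $\ell_2$-but-not-$\ell_1$ regime, at the price of importing two classical theorems in place of an explicit formula. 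In the converse direction your argument is essentially the paper's tail estimate made self-contained: you approximate the given global unitary by a local one directly and use the exact identity $\|\omega_\zeta-\omega_{\zeta'}\|=2\sqrt{1-|(\zeta|\zeta')|^2}$, where the paper invokes Glimm's approximation lemma and a test projection. Two small additions would complete the write-up: a reference for the purity of an infinite product of pure states, and one for von Neumann's criterion that $\sum_n|1-(\xi_n|\eta_n)|<\infty$ characterizes when two product vectors generate the same incomplete tensor product.
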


\begin{proof} 
Consider the standard representation of $M_2(\bbC)$ on $\bbC^2$. Then 
the pure states of $M_2(\bbC)$ are of 
the form $\omega_{(\cos \alpha,\sin\alpha)}$ for $\alpha\in (-\frac \pi2,\frac\pi 2)$. 

Let $\Phi(\vec\alpha)=\bigotimes_{n=1}^\infty
\omega_{(\cos\alpha_n, \sin\alpha_n)}$. This map is continuous: If
$a\in M_{2^\infty}$ and $\e>0$, fix $m$ and $a'\in M_{2^m}$ such that
$\|a-a'\|<\e/2$. Then $\Phi(\vec\alpha)(a')$ depends only on
$\alpha_j$ for $j\leq m$, and in a continuous fashion.

Recall that for $0<t_j<1$ we have $\prod_{j=1}^\infty t_j>0$ if and only
if $\sum_{j=1}^\infty (1-t_j)<\infty$. Therefore
\[
\sum_{n=1}^\infty(\alpha_n-\beta_n)^2<\infty \Leftrightarrow
\sum_{n=1}^\infty \sin^2\left(\frac {\alpha_n-\beta_n}2\right)<\infty
\Leftrightarrow \prod_{n=1}^\infty \cos(\alpha_n-\beta_n)>1.
\]
Assume $\prod_{n=1}^\infty\cos(\alpha_n-\beta_n)>0$. In the $n$-th
copy of $M_2$ in $M_{2^\infty}=\bigotimes_{n=1}^\infty M_2$ pick a
unitary $u_n$ such that
\[
\|1-u_n\|<\sqrt{2(1-|\cos(\alpha_n-\beta_n)|)}
\]
 and
$u_n(\cos\alpha_n, \sin\alpha_n)=(\cos\beta_n,\sin\beta_n)$. Note
that
\[
((\cos\alpha_n,\sin\alpha_n)|(\cos\beta_n,\sin\beta_n))=\cos(\alpha_n-\beta_n).
\]
Let $v_n=\bigotimes_{j=1}^n u_j$. Then $v_n$ for $n\in \bbN$ form a
Cauchy sequence, because $v_m-v_{m+n}=v_m(1-\bigotimes_{j={m+1}}^n
u_j)$ and therefore
\[
\textstyle\|v_m-v_n\|<\sqrt{2(1-\prod_{j=m}^\infty \cos(\alpha_j-\beta_j))}.
\]
Let  $v\in M_{2^\infty}$ be the limit of this Cauchy sequence. Since
for each $m$ and $a\in M_{2^m}$ we have
$\Phi(\vec\alpha)(a)=\Phi(\vec\beta)(v_n a v_n^*)$ for any $n\geq m$,
we have $\Phi(\vec\alpha)=\Phi(\vec\beta)\circ \Ad v$.

Now assume $\Phi(\vec\alpha)\sim_{M_{2^\infty}} \Phi(\vec\beta)$ and, for the sake
of obtaining a contradiction, that
$\prod_{n=1}^\infty\cos(\alpha_n-\beta_n)=0$.
 There is $m$ and a unitary $u\in
M_{2^m}$ such that
\[
\|\Phi(\vec\alpha)-\Phi(\vec\beta)\circ \Ad u\|<\frac 12.
\]
(by e.g.,  \cite{Glimm:On}).  However, we can find $n>m$ large enough
so that with $\xi_n=\bigotimes_{j=m}^n (\cos\alpha_j, \sin\alpha_j)$
and $\eta_n=\bigotimes_{j=m}^n(\cos\beta_j,\sin\beta_j)$ the quantity
\[
(\xi_n|\eta_n)=\prod_{j=m}^n\cos(\alpha_j-\beta_j)
\]
is as close to zero as desired. Then
$\|\omega_{\xi_n}-\omega_{\eta_n}\|$ is as close to 2 as desired,
since $a_n=\proj_{\bbC \xi_n}-\proj_{\bbC \eta_n}$ has norm close to
1 and $\omega_{\xi_n}(a_n)$ is close to 1 while
$\omega_{\eta_n}(a_n)$ is close to $-1$.
\end{proof}

\begin{proof}[Proof of Theorem~\ref{T1}]
 Assume $\sim_A$ is not smooth. 
The conclusion follows by  Glimm's Proposition~\ref{P1} 
and  Theorem~\ref{T5}.   \end{proof} 

\section{Concluding remarks}

We note that  the class of equivalence relations 
corresponding to spectra of C*-algebras is restrictive in another sense. 
 The following proposition was probably well-known (cf. \cite[Corollary~1.3]{Hj:Non-smooth}). 
 
\begin{prop} If $A$ is a separable C*-algebra then the
relation $\phi\sim_A \psi$ on $\bbP(A)$ is $F_\sigma$.
\end{prop}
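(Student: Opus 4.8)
The plan is to show that equivalence of pure states is an $F_\sigma$ relation on $\bbP(A)\times\bbP(A)$. The key observation is that $\phi\sim_A\psi$ holds if and only if there exists a unitary $u$ in $\tilde A$ with $\phi=\psi\circ\Ad u$, and the natural way to expose $F_\sigma$ structure is to quantify over unitaries while controlling the approximation by a rational parameter. First I would recall that if $\phi=\psi\circ\Ad u$ for some unitary $u$, then for every $\e>0$ there is a unitary $u$ in $\tilde A$ with $\|\phi-\psi\circ\Ad u\|<\e$; conversely, by a standard perturbation argument (essentially the content invoked via \cite{Glimm:On} in the proof of Theorem~\ref{T5}), if $\phi$ and $\psi$ are \emph{pure} and there is a unitary making $\|\phi-\psi\circ\Ad u\|$ sufficiently small, then in fact $\phi\sim_A\psi$. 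This equivalence of ``approximately conjugate'' and ``conjugate'' for pure states is what collapses an a priori $F_{\sigma\delta}$ description down to $F_\sigma$.

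Next I would write the relation explicitly. Fix a threshold $\e_0<2$ furnished by the perturbation lemma, so that for pure states,
\[
\phi\sim_A\psi \iff \inf\{\|\phi-\psi\circ\Ad u\|: u\text{ unitary in }\tilde A\}<\e_0.
\]
Since $\|\phi-\psi\circ\Ad u\|$ is a continuous function of $(\phi,\psi,u)$ in the weak*-topology on states (paired with the appropriate topology on the unitary group) and the separable algebra admits a countable dense subset, the infimum over unitaries can be replaced by an infimum over a countable dense family $\{u_k\}$ of unitaries in $\tilde A$. Then
\[
\phi\sim_A\psi \iff \exists k\ \|\phi-\psi\circ\Ad u_k\|\leq \e_0.
\]
For each fixed $k$, the set $\{(\phi,\psi):\|\phi-\psi\circ\Ad u_k\|\leq\e_0\}$ is closed, being the preimage of a closed set under a continuous map. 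Hence $\sim_A$ is a countable union of closed sets, i.e.\ $F_\sigma$.

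The main obstacle is the perturbation step: justifying that approximate unitary conjugacy of \emph{pure} states implies genuine conjugacy, with a threshold independent of the particular states. This is exactly where purity is essential --- the argument is of the same flavor as the contradiction derived at the end of the proof of Theorem~\ref{T5}, where $\|\omega_{\xi_n}-\omega_{\eta_n}\|$ being close to $2$ obstructs any small perturbation, and it traces back to Glimm's transitivity-type results in \cite{Glimm:On}. Two technical points need care: first, that the norm distance between a pure state and a weak*-limit behaves well enough to replace the arbitrary unitary by one from a countable dense set (continuity of $u\mapsto\psi\circ\Ad u$ in norm on the state space), and second, that passing to $\tilde A$ when $A$ is non-unital does not disturb the Borel and topological structure of $\bbP(A)$. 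Both are routine but should be stated, since the whole point of the proposition is that purity forces the otherwise-complicated conjugacy relation to be only one level up from closed.
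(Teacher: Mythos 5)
Your proposal is correct and follows essentially the same route as the paper: both reduce the existential quantifier to a countable dense family of unitaries and invoke the Glimm--Kadison fact that pure states at norm distance less than $2$ are unitarily equivalent, so that $\sim_A$ becomes a countable union of closed sets. The one imprecision is that $(\phi,\psi)\mapsto\|\phi-\psi\circ\Ad u_k\|$ is not weak*-continuous but only lower semicontinuous (a supremum of the continuous functions $|\phi(a)-\psi(u_kau_k^*)|$ over $a\in A_{\leq 1}$), which still makes the sublevel set $\{\|\phi-\psi\circ\Ad u_k\|\leq\e_0\}$ closed --- this is exactly what the paper makes explicit by quantifying over a countable dense subset of the unit ball.
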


\begin{proof} By replacing $A$ with its unitization if necessary we may assume $A$ is unital.
Fix a countable dense set $\cU$ in the unitary group of $A$ and a
countable dense set $\calD$ in $A_{\leq 1}$. We claim that
\[
\phi\sim_A \psi\Leftrightarrow (\exists u\in \cU)(\forall a\in
\calD)|\phi(a)-\psi(uau^*)|<1.
\]
Assume $\phi\sim_A\psi$ and fix $v$ such that $\phi=\psi\circ \Ad v$.
If $u\in \cU$ is such that $\|v-u\|<1/2$ then
\[
|\psi(uau^*-vav^*)|=|\psi((u-v)au^*-va(u^*-v^*))|<1
\]
for all $a\in A_{\leq 1}$.

Now assume $u\in \cU$ is such that $|\phi(a)-\psi(uau^*)|<1$ for all
$a\in \calD$. Then $\|\phi-\psi\circ \Ad u\|<2$ and by \cite{GliKa}
we have $\phi\sim_A\psi$.
\end{proof}

For a Hilbert space $H$ by $\cB(H)$ we denote the algebra of its bounded linear operators. 
Let  $\pi_1\colon A\to \cB(H_1)$ and $\pi_2\colon A\to \cB(H_2)$ be representations of~$A$. 
 We say $\pi_1$ and $\pi_2$ are
\emph{(unitarily) equivalent} and write  $\pi_1\sim \pi_2$ if there
is a Hilbert space isomorphism $u\colon H_1\to H_2$ such
that the diagram
\[
\diagram & \cB(H_1)\ddto^{\Ad u}\\
A\urto^{\pi_1} \drto_{\pi_2} & &  \Ad u(a)=uau^*\\
& \cB(H_2)
\enddiagram
\]
commutes. 

A representation of $A$ on some Hilbert space $H$ is \emph{irreducible} if there are no  nontrivial 
 closed  subspaces of $H$ invariant under the image of $A$.   
The spectrum of $A$, denoted by $\hat
A$, is the space of all equivalence classes of irreducible
representations of $A$. The GNS construction associates a
representation $\pi_\phi$ of $A$ to each  state
$\phi$ of $A$ (see e.g., \cite[Theorem~3.9]{FaWo:Set}). 
Moreover, $\phi$ is pure if and only if $\pi_\phi$ is irreducible (\cite[Theorem~3.12]{FaWo:Set})
and for pure states $\phi_1$ and $\phi_2$ we have that $\phi_1$ and $\phi_2$ are equivalent
if and only if $\pi_{\phi_1}$ and $\pi_{\phi_2}$ are equivalent (\cite[Proposition~3.20]{FaWo:Set}).

Fix  a separable C*-algebra $A$. 
Let $\Irr(A,H_n)$ denote  the space of irreducible representations
of $A$ on a Hilbert space $H_n$ of dimension $n$ for $n\in \bbN\cup
\{\aleph_0\}$. 
 Each $\Irr(A,H_n)$ is a Polish space with respect to
the weakest topology making all functions $\Irr(A,H_n)\ni \pi\mapsto
(\pi(a)\xi|\eta)\in \bbC$, for $a\in A$ and $\xi,\eta\in H_n$,
continuous. In other words, a net $\pi_\lambda$ converges to $\pi$ if
and only if $\pi_\lambda(a)$ converges to $\pi(a)$ for all $a\in A$.
 Since  $A$ is separable,  each irreducible
representation of $A$ has range in a separable Hilbert space, and
therefore $\hat A$ can be considered as a quotient space of the
direct sum of $\Irr(A,H_n)$ for $n\in \bbN\cup \{\aleph_0\}$. 
Therefore $\hat A$ carries a Borel structure (known as the
\emph{Mackey Borel structure}) inherited from a Polish space.
  For \emph{type I} C*-algebras (also
called \emph{GCR} or \emph{postliminal}) this space is a standard Borel space.
(All of these notions are explained in \cite[\S4]{Arv:Invitation}.)

Since pure states correspond to irreducible representations, we can identify 
 the Mackey Borel structure of $A$ with  a
 $\sigma$-algebra of sets in $\hat A$. It is easy to check that this $\sigma$-algebra 
 consists exactly of those sets  whose preimages in $\bbP(A)$
 are Borel subsets in $\bbP(A)$.

Glimm proved (\cite{Glimm:On}, \cite[\S 6.8]{Pede:C*})  that the Mackey Borel structure of
a C*-algebra $A$ is \emph{smooth} (i.e., isomorphic to a standard
Borel space) if and only if $A$ is a type I C*-algebra.
Proposition~\ref{P1} is a consequence of this result.

\begin{problem}[Dixmier, 1967]\label{P.Mackey-Borel}
Is the Mackey Borel
structure on the spectrum of a simple separable C*-algebra always the same when
it is not standard?
\end{problem}

 G. Elliott  generalized  Glimm's 
 result and proved 
 that the Mackey Borel structures  of simple AF algebras are isomorphic
 (\cite{Ell:Mackey}).  (A C*-algebra is an AF (approximately finite) algebra if it is a direct limit
 of finite-dimensional algebras.)
One  reformulation of Elliott's result is that for any two
simple separable AF algebras $A$ and $B$ there is a Borel isomorphism
$F\colon \bbP(A)\to \bbP(B)$ such that $\phi\sim_A \psi$ if and only
if $F(\phi)\sim_B F(\psi)$ (see \cite[\S6]{Ell:Mackey}).
Also,   \cite[Theorem 2]{Ell:Mackey}
implies that if $A$ is a simple separable AF algebra and $B$ is a
non-Type I simple separable algebra we have $\sim_A\,\leq_B\,
\sim_B$.

With this definition the quotient structure
Borel$(\bbP(A))/\sim_A$ is isomorphic to the Mackey Borel structure
of $A$. 
Note that  $\sim_A$ is smooth exactly when the Mackey Borel structure of
$A$ is smooth.

 Note that Mackey Borel structures
of $A$ and $B$ of separable C*-algebras are isomorphic if and only if there is a Borel
isomorphism $f\colon \hat X\to \hat X$ such that $\pi_1\sim_A \pi_2$ if and
only if $f(\pi_1)\sim_Bf(\pi_2)$. Hence Problem~\ref{P.Mackey-Borel} is
rather close in spirit to the theory of Borel equivalence relations.

N. Christopher Phillips suggested more general problems about the Mackey
Borel structure of simple separable C*-algebras, motivated by his discussions
with Masamichi Takesaki. There are two (related) kinds of questions: Can one do
anything sensible, and, from the point of view of logic, how bad is the
problem?

\begin{problem}\label{P.Borel.2}
Does the complexity of the Mackey Borel structure of a simple separable
C*-algebra increase as one goes from nuclear C*-algebras to exact ones to ones
that are not even exact?
\end{problem}

For definitions of nuclear and exact C*-algebras see e.g., \cite{Black:Operator}. 

\begin{problem} Assume $A$ and $B$ are C*-algebras and $\sim_A$ is
 Borel-reducible to $\sim_B$. What does this fact imply about the
relation between $A$ and $B$? \end{problem}

\providecommand{\bysame}{\leavevmode\hbox to3em{\hrulefill}\thinspace}
\providecommand{\MR}{\relax\ifhmode\unskip\space\fi MR }
% \MRhref is called by the amsart/book/proc definition of \MR.
\providecommand{\MRhref}[2]{%
  \href{http://www.ams.org/mathscinet-getitem?mr=#1}{#2}
}
\providecommand{\href}[2]{#2}

\end{document}